\theoremstyle{plain}
\newtheorem{thm}{Theorem}[section]
\newtheorem{lem}{Lemma}[section]
\newtheorem{rem}{\textit{Remark}}[section]
\theoremstyle{definition}
\newtheorem{defn}{Definition}[section]
\theoremstyle{remark}
\numberwithin{equation}{section}
\numberwithin{equation}{section}
\DeclareMathOperator{\sgn}{\mathrm{sgn}}
\DeclareMathOperator{\supp}{\mathrm{supp}}
\newcommand{\vertiii}[1]{{\left\vert\kern-0.25ex\left\vert\kern-0.25ex\left\vert #1 
    \right\vert\kern-0.25ex\right\vert\kern-0.25ex\right\vert}}
\newcommand\underrel[3][]{\mathrel{\mathop{#3}\limits_{%
			\ifx c#1\relax\mathclap{#2}\else#2\fi}}}
\providecommand{\norm}[1]{\left\lVert#1\right\rVert}
\title[Minimal Blow-up rate in 2D ZK]{On the minimal Blow-up rate for the 2D generalized Zakharov- Kuznetsov model}
\author[J. Trespalacios]{Jessica Trespalacios}
\address{Departamento de Ingeniería Matemática, Universidad de Chile, Santiago, Chile}
\email{jtrespalacios@dim.uchile.cl}
\thanks{J.T.'s work was funded in part by the National Agency for Research and Development (ANID)/ DOCTORADO NACIONAL/2019 -21190604, Chilean research grants FONDECYT 1231250, Centro de Modelamiento Matemático (CMM), ACE210010 and FB210005, BASAL funds for centers of excellence from ANID-Chile.}
\subjclass{Primary: 35Q76. Secondary: 35Q75}
\keywords{Blow up, modified Zakharov-Kuznetsov equation}	
\date{\today}
\begin{document}
	\begin{abstract} In this note we consider the generalized Zakharov-Kuznetsov equation in $\mathbb R^2$, for initial conditions in the Sobolev space $H^s$ with $s>3/4.$ Assuming that there is a blow-up solution at finite time $T^{*}$, we obtain a lower bound for the blow-up rate of that solution, expressed in terms of a lower bound for the $H^s$ norm of the solution. In the particular case of the modified Zakharov-Kuznetsov equation,  {\color{teal} a nontrivial gap is found between conjectured blow-up rates and our results.} The analysis is based on properly quantifying the linear estimates given by Faminskii \cite{Faminskii}, as well as the local well-posedness theory of Linares and Pastor \cite{Linares2009,LinaresPastor}, combined with an argument developed by  Weissler \cite{Weissler} and {\color{teal} Colliander, Czuback and Sulem} \cite{Colliander} in the context of the semilinear heat equations.
			\end{abstract}
	
	\maketitle
	
%	\tableofcontents
	
\section{ Introduction and main results}
\subsection{Setting}
Consider the two-dimensional (2D) generalized Zakharov-Kuznetsov (ZK) equation
\begin{equation}\label{gZK}
u_t+u_{xxx} +u_{xyy} +u^ku_x=0.
\end{equation}
This equation is an extension of the well-known generalized Korteweg-de Vries (KdV) equation to two spatial dimensions.  The ZK equation in 3D with  $k=1$, was originally proposed by Zakharov and Kuznetsov \cite{ZK74}, to model waves in magnetized plasmas, see also \cite{KRZ}, and a rigorously justified derivation in \cite{LLS} from the Euler-Poisson system for uniformly magnetized plasma. From the local theory, it follows that solutions to the ZK equation have a maximal forward lifespan $[0, T ) $ with either $T = +\infty$ or $ T < \infty$. Also, in the case $T<+\infty$, one has $||\nabla u(t)||_{L^2(\mathbb R ^2)} \nearrow \infty $ as $t \longrightarrow T,$ although the unbounded growth of the gradient might also happen in infinite time. In \cite{Faminskii} Faminskii considered the case $k=1$ in 2D, showing  local and global well-posedness for initial data in $H^{m}(\mathbb R^2),$ $m\geq 1$, integer. His method of proof was inspired by the one given by Kenig, Ponce and Vega in \cite{Kenig} to show local well-posedness for the IVP associated with the KdV equation. To prove global results, he made use of the $L^2$ and $H^1$ conserved quantities for solutions of \eqref{gZK}. 

\medskip

In this note, we consider the gZK in two cases separately, first when $k=2,$ which corresponds to the so-called modified ZK (mZK)-equation. Notice that also in this case, mZK has a physical meaning. Indeed, it appears as an asymptotic model in the context of weakly nonlinear ion-acoustic waves in a plasma of cold ions and hot isothermal electrons with a uniform magnetic field \cite{Monro99}.  On the other hand, the case $k \geq 3$ does not seem to appear as a physically relevant model but (as the generalized KdV equation) can be used as a mathematical toy model to investigate the competition between nonlinearity and dispersion.
  	
\subsection{Setting of the model}
Consider the two-dimensional initial value problem (IVP) associated with the generalized Zakharov-Kuznetsov (gZK) equation 
\begin{equation}\label{mZK}
\begin{cases}
&u_t+u_{xxx}+u_{xyy}+u^ku_x=0, \quad (x,y)\in \mathbb{R}^2, \quad t>0, \quad k\in \mathbb{N},\\
& u(0,x,y)=u_0(x,y),
\end{cases}
\end{equation} 
where $u=u(t,x,y)$ is a real valued function. During their existence, solutions to gZK have conserved quantities. Relevant to this work is the $L^2$ norm (or mass) and the energy (or Hamiltonian):
\begin{equation*}
\begin{aligned}
M[u(t)]&=\int_{\mathbb R ^2}u^2(t)= M[u(0)],\\
E[u(t)]= \frac{1}{2}&\int_{\mathbb R^2} \left(u_x^2(t)+u_y^2(t) \right)-\frac{2}{(k-1)(k+2)}\int_{\mathbb R^2}u^{k+2}(t)=E[u(0)].
\end{aligned}
\end{equation*}	
These are well-defined quantities under appropriate conditions on the solution $u(t,x,y)$.  An important symmetry in the evolution equation \eqref{gZK} is the \textit{scaling invariance}, which states that an appropriately rescaled version of the original solution is also a solution of the equation. For the equation \eqref{mZK} it is
\begin{equation*}
u_{\lambda}(t,x,y)=\lambda^{2/k} u(\lambda^3 t, \lambda x, \lambda y), \quad \lambda>0,
\end{equation*}	
with  initial data given by $u_{\lambda}(t,x,y)=\lambda^{2/k} u_0( \lambda x, \lambda y),$ hence 
\begin{equation}\label{s_inv}
||u(0,\cdot,\cdot)||_{\dot{H}^s}=\lambda^{2/k+s-1}||u_0||_{\dot{H}^s},
\end{equation}
where $\dot{H}^s=\dot{H}^s(\mathbb{R}^2)$ denotes the homogeneous Sobolev space of order $s.$ {\color{teal} Note that when k=2, a scaling argument is useful to conclude that the blowdown rate is 1/3 in the context of the $H^1$ norm} .The scale-invariant Sobolev space for the gZK equation is $H^{s_c(k)}(\mathbb{R}^2)$, where $s_c(k) = 1-2/k$.
Therefore, we will use the  well-posedness result for \eqref{mZK} 
obtained for indices $s > s_c(k)$.
We divide the paper into two parts. The first one is built on the results of global well-posedness by Linares and Pastor in \cite{Linares2009, LinaresPastor}, for two different cases, $k=2$ and $k\geq 3$, but analyzed side by side. The second part combined the LWP results with an argument developed by  Weissler \cite{Weissler} and Colliander-Czuback-Sulem \cite{Colliander} in the context of the semilinear heat equations.
%%%%%

%This symmetry makes invariant the Sobolev norm $\dot{H}^s$ with $s=0$, where $\dot{H}^s(\mathbb R^2)$ denotes the homogeneous Sobolev space of order $s$. Therefore, the index $s$ gives rise to the critical-type classification of \eqref{gZK}. Since the mass of the solution (the $L^2$ norm) of  the equation \eqref{mZK} is scaling-invariant, the 2D mZK equation is said to be mass-critical.
%The mZK equation has a family of localized traveling waves, which travels only in the $x$ direction:
%\begin{equation*}
%u(t,x,y)=Q_c(x-ct,y),
%\end{equation*}
%where $Q_c$ satisfies
%\begin{equation*}
%\Delta Q_c-cQ_c+Q_c^3=0.
%\end{equation*}
%Taking $Q_c>0$ in $H^1(\mathbb R^2)$, one has the so-called ground state solution. We note that $Q_c\in C^{\infty}(\mathbb R^2)\cap H^1(\mathbb R^2)$. The solitons $Q_c(x,y)$ are related to the soliton $Q_1(x,y)=:Q(x,y)$ for $c>0$ as follows:
%\begin{equation*}
%Q_c(x,y)=\sqrt{c}Q(\sqrt c x,\sqrt c y);
%\end{equation*} 
%thus, it suffices to consider $c=1.$ One can prove that $Q$ is radial, and $\partial_r Q(r) < 0$ for any $r=|(x,y)|>0$. Moreover, $Q$ has exponential decay: $\partial^{\alpha}Q(x,y)\leq c_{\alpha}e^{-c_0 r}$ for any multi-index $\alpha$ and any $(x,y)\in \mathbb R^2.$ The asymptotic stability of this solution was studied in \cite{CMPS,Farah,PV1}, while the existence of breathers is discussed in \cite{Faya}. Local decay of large solutions in the $p=2$ case was obtained in \cite{Mendez}.
%
\medskip

Regarding to the 2D gZK equation \eqref{mZK}, Biagini-Linares \cite{BLinares}  studied the local well-posedness (LWP) for the mZK in $H^1(\mathbb R^2)$. Linares and Pastor in \cite{Linares2009} studied the IVP associated with both the ZK and mZK, in particular, they proved that the modified  ZK equation is locally well-posed for data in  $H^s(\mathbb R ^2),$ $s> 3/4$ and they also showed ill-posedness, in the sense that the data-to-solution map fails to be uniformly continuous for $s\leq 0$. So one cannot expect semilinear well-posedness in the critical space $L^2(\mathbb{R}^2)$. 
This LWP result was improved by Ribaud and Vento \cite{RV2011}, who proved LWP for data in $H^s(\mathbb R ^2),$ $s> 1/4$, for the case $k=2$. 
More recently, Kinoshita \cite{Kinoshita} established LWP at regularity $s = 1/4$, which is in fact optimal for the Picard iteration approach. Regarding global well-posedness, Linares and Pastor \cite{LinaresPastor} proved the global well-posedness in $H^s(\mathbb{R}^2)$ for $s > 53/63$, when the mass of the initial data
is smaller than the mass of the ground state in the focusing case. Bhattacharya et. al. in \cite{Bat} used the $I$-method to obtain global well-posedness in $H^s(\mathbb R^2)$ space for $s > 3/4$, thus, improving the result of Linares and Pastor \cite{LinaresPastor}. 

\medskip

For the case $k=2$, using the energy and mass conservation, together with the Gagliardo-Nirenberg inequality and its sharp constant expressed in terms of the solution mass, one has 
\[
\norm{\nabla u}_{L^2}^2 \leq \left(1-\frac{\norm{u}_{L^2}^2}{\norm{Q}_{L^2}^2} \right)^{-1}E[u]. 
\]
Thus, if $\norm{u_0}_{L^2}< \norm{Q}_{L^2},$ solutions with initial condition $u_0$ exist also globally in time,  while the {\color{black} blow-up might be possible if the initial mass $\norm{u_0}_{L^2}$ is greater or equal to that of the soliton $Q,$}  see \cite{KRS2021}. Recently, in \cite{Bat2}, it was proved mass-concentration of low-regularity blow-up solutions, hinting at an important step towards the proof of finite time blow-up. Recall the definition for blow-up solution:
\begin{defn}[Blow-up solution]
Assume $s\geq \frac14$. We say that the solution $u(t,x,y)$ to the IVP \eqref{mZK} with $u_0\in H^s(\mathbb R^2)$ blows up in finite time if there exists $0<T^{*}<\infty$ such that
\begin{equation*}
\lim_{t \uparrow T^*}\norm{u(t,x,y)}_{H^s(\mathbb R^2)}=\infty.
\end{equation*}
\end{defn}

What is the minimal rate of blow-up in gZK? What are the possible blow-up rates in gZK? This is an ongoing research question that has attracted the attention of several researchers during the past years, in particular for the case of the mZK model ($k=2$). Concerning the study of the existence of blow-up solutions, significant efforts have been made with respect to the case of the 2D mZK equation. Recall that this equation is mass critical, and therefore, can be compared to the critical generalized KdV equation in 1D. For this last equation, Martel and Merle \cite{MartelMerle} proved the existence of solutions that blow up in the $H^1(\mathbb R^2)$ norm in finite time. However, no analogous results are yet available for the  2D mZK equation. When the initial data  $u_0\in H^1(\mathbb R^2)$ Farah et. al.  \cite{Farahetal} described that there exists $\alpha > 0$ such that the solution to 2D mZK blows up in
finite or infinite time, if the energy is negative and if the mass of the initial data satisfies $\norm{Q}_{L^2(\mathbb R^2)}< \norm{u_0}_{L^2(\mathbb R^2)}< \norm{Q}_{L^2(\mathbb R^2)}+\alpha$. This is referred to as the near-threshold blow-up phenomenon for the negative energy solutions. Klein-Roudenko-Stoilov \cite{KRS2021} investigated the $H^1(\mathbb R^2)$ blow-up phenomenon for the 2D mZK equation, numerically. In particular, they conjecture that blow-up happens in finite time and that blow-up solutions have some resemblance to being self-similar, i.e., the blow-up core forms a rightward-moving self-similar type rescaled profile with the blow-up happening at finite time and spatial infinity. 

\medskip

Recently, two interesting blow-up results have been proposed for the mZK model. On the one hand, Bozgan et. al. in \cite{Bozgan} proposed a {\color{teal} analytical} proof for the existence of a finite-time blow-up solution for the mZK equation if the initial data satisfies $\norm{u_0}_{L^2} \sim \norm{Q}_{L^2(\mathbb R^2)}+\varepsilon_0$, with an additional condition on the $H^1(\mathbb R^2)$ norm of $\varepsilon_0$. Also, the authors propose that the blow-up rate is fixed and given in the interval $(\frac{1}{3},1)$. Their proof is based on Martel, Merle and Rapha\"el ideas \cite{MartelMerleRaphael} but extended to the 2D case, and requires some numerical tests for the spectral properties of related blow-up linear operators. On the other hand,  Chen et. al. in \cite{gong}, also using Martel, Merle and Rapha\"el ideas and previous spectral results from Farah et al. \cite{Farahetal}, proposed a different {\color{teal} analytical} blow up method leading to a fixed blow-up rate given in the interval $ (\frac{5}{7},\frac{5}{6}).$ This proposed blow-up rate, classified as stable, is far from the self-similar one. See {\color{teal}Figure \ref{Fig1}} for details. Also, the two mentioned recent results employ different Lyapunov functions. No minimal mass blow-up solution seems to exist in mZK \cite{gong2}, contrary to the quintic gKdV case \cite{MartelMerleRaphael2}.

\begin{figure}[h] \centering
\begin{tikzpicture}
    % Configuración de la recta numérica
    \draw[->] (-0.5,0) -- (10,0) node[anchor=north west] {Blow up rate mZK.};
    \draw (0,0.1) -- (0,-0.1) node[below] {0};
    \draw (1,0.1) -- (1,-0.1) node[below] {$\ge$ 7/48};
     \draw (3,0.1) -- (3,-0.1) node[below] {1/3};
    \draw (5,0.1) -- (5,-0.1) node[below] {1/2};
    \draw (7,0.1) -- (7,-0.1) node[below] {5/7};
     \draw (8,0.1) -- (8,-0.1) node[below] {$\sim$ 0.74}; % nuevo
      \draw (9,0.1) -- (9,-0.1) node[below] {5/6};
     \draw (10,0.1) -- (10,-0.1) node[below] {1};

   % \draw (8,0.1) -- (8,-0.1) node[below] {5/6};
    % \draw (9,0.1) -- (9,-0.1) node[below] {1};
    % Puntos específicos
    \filldraw (0,0) circle(1pt) node[above] {};
    \filldraw (1,0) circle(1pt) node[above] {This work};
    \filldraw (5,0) circle(1pt) node[above] {\cite{KRS2021}};
     \filldraw (8,0) circle(1pt) node[above] {\cite{gong,Bozgan}};
 %    \filldraw (7.5,0) circle(0pt) node[above] {\cite{gong}};
       %  \node[scale=4,inner sep=0pt,outer sep=0pt,anchor=west,label=above:$a_i$] (ail) at (1,4) {[};
         %\draw[dashed] (ail) -- +(0,-4)node[pos=1](a){};
      %   \draw[decorate, decoration={brace, amplitude=7pt},] ([yshift=-0.7cm]10,0)-- node[below=0.25cm]
         %{\cite{Bozgan}}([yshift=-0.7cm]3,0);
      %   \draw[decorate, decoration={brace, amplitude=7pt},] ([yshift=0.2cm]7,0)-- node[above=0.25cm]
     %    {\cite{gong}}([yshift=0.2cm]8,0);
\end{tikzpicture}
\caption{Recent proposed blow-up rates in mZK. {\color{teal} In all cases, the initial data in space $H^1$ is considered.}}\label{Fig1}
\end{figure}

%More precisely, proved the following theorem
%%%%%%%%%%%%%%%%%%%%%%%%%
\subsection{Main Result} In this work, we provide a lower bound on the blow-up rate for solutions to mZK and gZK models in 2D. Our analysis relies on the local well-posedness results of Linares and Pastor  \cite{Linares2009,LinaresPastor} in $H^s,$ $s>3/4$. The approach is to start with some important linear estimates given by Faminskii \cite{Faminskii}, and then move on to non-linear estimates given by Linares and Pastor in \cite{Linares2009,LinaresPastor} for the mZK and gZK, respectively. In particular, in both cases, we carefully keep track of the power of time involved in the estimates, as it is central for the analysis of the lower bound for the blow-up rate. In a second stage, we will extend to 2D gZK the ideas of Colliander et. al. \cite{Colliander}, already based on previous arguments used for the heat equation by Weissler \cite{Weissler} and later extended to nonlinear Schr\"{o}dinger equations by Cazenave and Weissler \cite{CW}. More precisely, %we can prove what follows 

\begin{thm}
Consider the IVP \eqref{mZK} with initial  conditions $u_0\in H^s(\mathbb R^2)$ with $s>3/4$.  Assume that the solution $u(t,x,y)$ blows up in a finite time $T^*$ in $H^s(\mathbb R^2).$ Then,  we have the lower bound for the rate of blow-up in the corresponding Sobolev space norms, given by  %with a rate at least $7/48\sim \text{0.1458}$. Indeed, we have the following  lower bound for the blow-up rate:
\begin{enumerate}
\item For $k=2$
\begin{equation}\label{LBR-BU}
 \norm{u(t)}_{H^s} >C(s)(T^{*}-t)^{-7/48}, \quad t\uparrow T^*.
\end{equation}

\item For $k\in \mathbb{N}$ and $k\geq 3$
\begin{equation}\label{LBR-BUk}
\| u(t)\|_{H^s} >\frac{C_k}{(T^{*}-t)^{\gamma/(k-1)}},  \quad t\uparrow T^*,
\end{equation}
where \[\frac{\gamma}{k-1}\in (0,0.041).\]
\end{enumerate}
\end{thm}

For the case of the mZK, compared with the conjectured (near) self-similar blow-up rate \cite{KRS2021}, we are somehow far from the proposed rate of decay. However, it was noticed in \cite{KRS2021} that no particular rate of decay was extremely favored from numerical experiments, leading to an unclear picture of the possible rates of decay for critical ZK. Additionally, it is expected that many possible blow-up rates are available for mZK in the case of slowly decaying slightly supercritical mass, in the same vein as previous results \cite{MartelMerleRaphael3,MartelPilod1,MartelPilod2}.

\medskip

Here is a brief description of the guidelines that we will follow in this work. First of all, the proof is based on a previous result obtained for the case of the Zakharov system by Colliander-Czuback-Sulem \cite{Colliander} and previous well-posedness results by Faminskii \cite{Faminskii} and Linares-Pastor \cite{Linares2009}. The goal is to obtain an explicit quantitative version of the local well-posedness obtained in \cite{Faminskii}, key to applying the arguments from \cite{Colliander} in the case of the mZK model. In Section \ref{linear_estimates}, we present in detail the linear estimates associated with the equation \eqref{mZK}. In the first instance, the goal is to show that we can handle the linear part in the right side of \eqref{solD}, and produce \textit{additional} powers of the existence time $T$ in the process. More precisely, we shall obtain the improved estimate \cite{Faminskii}:
\begin{lem}\label{teoF}
Let $u_0 \in H^{s}(\mathbb{R}^2)$, $s>3/4$. Then,
\begin{equation*}
\norm{U(t)u_0}_{L^2_{x}L_{yT}^{\infty}}\leq c(s)T^{1/8}\norm{u_0}_{H^s_{xy}},
\end{equation*}
with $c(s)$ is a constant depending on $s$.
\end{lem}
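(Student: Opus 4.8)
The plan is to obtain an explicit, time-quantified version of Faminskii's maximal-function estimate for the linear Zakharov--Kuznetsov group, keeping careful track of the length $T$ of the time interval. Writing the propagator on the Fourier side,
\[
U(t)u_0(x,y)=c\int_{\mathbb R^2}e^{\iu(x\xi+y\eta)+\iu t(\xi^3+\xi\eta^2)}\,\widehat{u_0}(\xi,\eta)\,d\xi\,d\eta,
\]
the quantity to be bounded is $\big\|\sup_{y\in\mathbb R,\,t\in[0,T]}\abs{U(t)u_0}\big\|_{L^2_x}$. I would separate the two suprema: first dispose of the supremum in $y$, then reduce the remaining $\big\|\sup_{t\in[0,T]}(\cdot)\big\|_{L^2_x}$ to a one-dimensional Airy-type maximal estimate in the $x$ variable, in the spirit of Kenig--Ponce--Vega \cite{Kenig} as adapted by Faminskii \cite{Faminskii}.

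Concretely, the key steps are as follows. First, for fixed $x$ and $t$, treat $U(t)u_0$ as a function of $y$ and apply the one-dimensional Sobolev embedding $H^{1/2+}_y\hookrightarrow L^\infty_y$; this removes the $y$-supremum at the cost of the weight $\langle\eta\rangle^{1/2+}$, that is, of half a derivative in $y$. Next, after a Minkowski/Plancherel step in the $\eta$ variable, the problem collapses onto the one-dimensional oscillatory integral with phase $\phi(\xi)=\xi^3+\xi\eta^2$ (with $\eta$ a frozen parameter), for which a van der Corput/stationary phase analysis as in \cite{Kenig,Faminskii} yields the $L^2_xL^\infty_t$ maximal bound at the cost of a further $1/4^{+}$ derivative in $x$; together with the previous step this reproduces the threshold $s>3/4$. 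The novel point is to keep the interval $[0,T]$ explicit throughout. Here I would use the scaling symmetry of the group: the change of variables $t\mapsto t/T$ (equivalently, a dyadic decomposition in frequency in which the dispersive time scale at frequency $N$ is $N^{-3}$) converts the bounded time interval into a multiplicative factor, and tracking the exponents through the van der Corput bounds produces the gain $T^{1/8}$. Finally, summing the Littlewood--Paley pieces and using $s>3/4$ to control the resulting geometric series delivers the stated inequality.

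The main obstacle is the extraction of the power of $T$: an $L^\infty$ norm in time carries no integrability, so there is no gain of a power of $T$ to be had for free, in contrast with the Duhamel (inhomogeneous) term where Hölder in time supplies it directly. The resolution is to exploit the dispersive decay quantitatively --- the constants in the van der Corput estimate for $\phi$ depend on the length of the oscillation window, and normalizing that window to unit size via the scaling $t\mapsto t/T$ is precisely what trades the regularity margin $s-\tfrac34>0$ (equivalently, the distance between $s$ and the scaling-critical exponent $\tfrac12$) for the positive power of $T$. Verifying that the bookkeeping closes at the clean value $1/8$ is the delicate part; if needed one can instead interpolate the maximal-function estimate against Faminskii's smoothing estimate $\norm{\partial_x U(t)u_0}_{L^\infty_xL^2_{yT}}\lesssim\norm{u_0}_{L^2}$, which supplies the missing power of $T$ in a more transparent way.
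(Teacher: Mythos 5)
There are two genuine gaps in your proposal, one in the reduction and one in the extraction of the power of $T$. First, the reduction ``Sobolev embedding in $y$ plus a $1/4$-derivative Airy maximal estimate in $x$'' does not close. The one-dimensional maximal estimate that costs $1/4$ of a derivative is the $L^4_x L^\infty_t$ estimate of Kenig--Ponce--Vega; the $L^2_x L^\infty_T$ maximal estimate you actually need here costs $3/4+$ derivatives, so your bookkeeping would land at $1/2+3/4=5/4$ rather than $3/4$. Moreover, the step ``Minkowski/Plancherel in $\eta$, then freeze $\eta$'' is obstructed by the supremum in $t$ (and in $y$): $\sup_{t,y}$ does not commute with passing to the Fourier side in $y$, and for fixed $\eta$ the phase $t(\xi^3+\xi\eta^2)+x\xi$ contains a transport at the unbounded speed $\eta^2$, so uniformity in $\eta$ of the frozen one-dimensional estimate is exactly the issue, not a formality. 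The paper does not tensorize: it keeps the genuinely two-dimensional kernel $J=\iint e^{\iu(t\xi^3+t\xi\eta^2+x\xi+y\eta)}\psi_1(\xi)\psi_2(\eta)\,d\xi\,d\eta$, proves an $x$-integrable majorant $H_{k,T}$ for each dyadic block $\psi_k$ with the explicit bound $\int_0^\infty H_{k,T}\le cT^{1/4}2^{3k/2}(k+1)^2$, and only then sums in $k$ using $s>3/4$.

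Second, and more importantly for the statement being proved, your mechanism for producing $T^{1/8}$ does not work. The scaling $t\mapsto t/T$ (i.e. $\lambda=T^{-1/3}$ in $u_\lambda(t,x,y)=\lambda u(\lambda^3t,\lambda x,\lambda y)$) converts a unit-time estimate at regularity $\sigma$ into a gain of $T^{(\sigma-1/2)/3}$, which at $\sigma=3/4$ is $T^{1/12}$, not $T^{1/8}$; so the bookkeeping does not ``close at the clean value $1/8$,'' and the exponent $1/8$ cannot be guessed from scaling alone. In the paper the $1/8$ has a specific origin: the factor $T^{1/4}$ in the $L^1_x$ bound for $H_{k,T}$ --- obtained by re-estimating the low-frequency-in-$\xi$ part of Faminskii's kernel (the cutoff $\mu(2-|\xi|)$) on the region $-32t\cdot2^{2k}<x<-128T$ --- is halved by a $TT^*$/duality argument (Lemma 2.1 of Ginibre--Velo applied in $X=L^2_x L^1_{ty}$), giving $\norm{U(t)B_k^2v_0}_{L^2_xL^\infty_{yT}}\le cT^{1/8}2^{3k/4}(k+1)\norm{B_ku_0}_{L^2}$. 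Your proposal contains neither the kernel re-analysis nor the $TT^*$ step, which is where the square root (hence $1/8=\tfrac12\cdot\tfrac14$) actually comes from. The suggested fallback of interpolating against the smoothing estimate $\norm{\partial_xU(t)u_0}_{L^\infty_xL^2_{yT}}\lesssim\norm{u_0}_{L^2}$ also cannot supply the missing power: neither endpoint carries a positive power of $T$, and interpolating $L^\infty_{yT}$ with $L^2_{yT}$ only yields intermediate $L^q_{yT}$ norms, not the maximal norm required here.
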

Next, in Section \ref{Nonlinear_estimates}  we will study nonlinear estimates for the full problem, in the spirit of Linares and Pastor \cite{Linares2009} for $k=2$ and for $k\geq 3$ we consider \cite{LinaresPastor}.  Throughout the section, we will analyze each case side by side. Our goal is to carry out all the powers in the variable $T$. Specifically, we will improve with quantitative bounds in time the following results:
\begin{thm}[Theorem 1.1 in \cite{Linares2009}]\label{LWP} Assume $k=2.$ For any $u_0\in H^s(\mathbb{R}^2),$ $s> 3/4$, there exist $T=T(\norm{u_0}_{H^s})>0$ and a unique solution of the IVP \eqref{mZK}, say $u(\cdot),$ defined in the interval $[0,T]$ such that 
\begin{align*}
& u\in C\left([0,T]; H^s(\mathbb R^2) \right),\\
& \norm{D_x^su_x}_{L^{\infty}_x L^2_{yT}} +\norm{D_y^su_x}_{L^{\infty}_x L^2_{yT}} < \infty,\\
& \norm{u}_{L^3_T L^{\infty}_{xy}} + \norm{u_x}_{L^{9/4}_T L_{xy}^{\infty}} < \infty,
\end{align*}
 and 
 \begin{equation*}
 \norm{u}_{L_x^2 L_{yT}^{\infty}}< \infty.
 \end{equation*}
\end{thm}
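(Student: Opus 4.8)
The plan is to establish Theorem~\ref{LWP} by a contraction mapping argument in a mixed-norm space adapted to the anisotropic smoothing of the linear flow, following the Kenig--Ponce--Vega scheme while keeping explicit track of the powers of $T$. First I would recast \eqref{mZK} in its Duhamel form, writing the nonlinearity as $u^2u_x=\tfrac13\partial_x(u^3)$ so that the derivative can later be absorbed by the smoothing effect:
\[
u(t) = U(t)u_0 - \frac{1}{3}\int_0^t U(t-t')\,\partial_x\bigl(u^3\bigr)(t')\,dt'.
\]
I would then work in the space $X_T$ whose norm collects the four quantities appearing in the statement,
\begin{align*}
\norm{u}_{X_T} :={} & \norm{u}_{L^\infty_T H^s_{xy}} + \norm{D_x^s u_x}_{L^\infty_x L^2_{yT}} + \norm{D_y^s u_x}_{L^\infty_x L^2_{yT}} \\
& + \norm{u}_{L^3_T L^\infty_{xy}} + \norm{u_x}_{L^{9/4}_T L^\infty_{xy}} + \norm{u}_{L^2_x L^\infty_{yT}},
\end{align*}
and define the solution map $\Phi(u)(t):=U(t)u_0-\tfrac13\int_0^t U(t-t')\,\partial_x(u^3)(t')\,dt'$ on a closed ball $\{\norm{u}_{X_T}\le a\}\subset X_T$.

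The linear contribution $U(t)u_0$ is controlled termwise by the estimates of Faminskii \cite{Faminskii}: the unitarity of $U(t)$ on $H^s$ bounds the $L^\infty_TH^s$ norm, the homogeneous local smoothing effect gives $\norm{D_x^s\partial_x U(t)u_0}_{L^\infty_xL^2_{yT}}+\norm{D_y^s\partial_x U(t)u_0}_{L^\infty_xL^2_{yT}}\lesssim\norm{u_0}_{H^s}$, the relevant Strichartz inequalities handle the $L^3_TL^\infty_{xy}$ and $L^{9/4}_TL^\infty_{xy}$ norms, and the improved maximal-function bound of Lemma~\ref{teoF} yields $\norm{U(t)u_0}_{L^2_xL^\infty_{yT}}\le c(s)T^{1/8}\norm{u_0}_{H^s}$. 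Each piece is therefore at most $C\norm{u_0}_{H^s}$ times a nonnegative power of $T$.

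The core of the argument is the nonlinear estimate: I must bound each of the six components of $\norm{\Phi(u)-U(t)u_0}_{X_T}$ by $C\,T^{\theta}\norm{u}_{X_T}^3$ with $\theta>0$. For the two smoothing components I would apply the inhomogeneous (retarded) smoothing estimate, which gains two $x$-derivatives on the Duhamel integral and thereby reduces the task to controlling $\norm{D_x^s(u^3)}_{L^1_xL^2_{yT}}$ and its $D_y^s$ analogue; the fractional derivative of the triple product is then distributed by the Kenig--Ponce--Vega fractional Leibniz rule, and the resulting factors are separated by Hölder in the mixed space-time variables, placing the factor that carries the derivative in the smoothing or $H^s$ norm and the remaining factors in the maximal-function norm $\norm{\cdot}_{L^2_xL^\infty_{yT}}$ and the Strichartz norms. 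For the maximal-function and Strichartz components of $\Phi(u)$ I would use the corresponding inhomogeneous linear estimates together with the same Leibniz--Hölder bookkeeping. In each case the Hölder splitting in the time variable over $[0,T]$ leaves a factor $T^{1/r}$ with $r<\infty$, which supplies the required positive power $T^{\theta}$.

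The main obstacle is precisely this exponent bookkeeping. Because the ZK smoothing is anisotropic — a gain only in the $x$ direction — while the nonlinearity carries an $x$-derivative of a cubic product, the Hölder exponents in $t$, $x$, and $y$ must be chosen simultaneously so that (i) every factor lands in a norm already present in $\norm{\cdot}_{X_T}$ and (ii) the leftover time power is strictly positive for all six target norms at once; it is here that the hypothesis $s>3/4$ enters, making Lemma~\ref{teoF} and the Strichartz estimates available with the stated exponents. Once the two inequalities $\norm{\Phi(u)}_{X_T}\le C\norm{u_0}_{H^s}+C\,T^{\theta}a^3$ and $\norm{\Phi(u)-\Phi(v)}_{X_T}\le C\,T^{\theta}a^2\norm{u-v}_{X_T}$ are in hand, I would take $a=2C\norm{u_0}_{H^s}$ and then choose $T=T(\norm{u_0}_{H^s})$ small enough that $\Phi$ maps the ball into itself and is a contraction there; the Banach fixed point theorem produces a unique solution in $X_T$, and a standard continuity argument upgrades it to $u\in C([0,T];H^s(\mathbb{R}^2))$.
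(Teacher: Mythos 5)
Your proposal is essentially the paper's own argument: a contraction mapping in the same six-component mixed-norm space, with the linear flow controlled by Faminskii's estimates (including the quantified maximal-function bound of Lemma~\ref{teoF} with its $T^{1/8}$ factor), the nonlinearity handled by the fractional Leibniz rule plus Hölder in time to extract positive powers of $T$, and the ball radius and $T$ chosen exactly as you describe. The only cosmetic deviation is that you rewrite $u^2u_x=\tfrac13\partial_x(u^3)$ and invoke a two-derivative retarded smoothing estimate for the $D^s_xu_x$ component, whereas the paper keeps $u_x$ explicit and uses only the single-derivative homogeneous smoothing bound of Linares--Pastor together with the norms $\norm{D^s_xu_x}_{L^\infty_xL^2_{yT}}$ and $\norm{u_x}_{L^{9/4}_TL^\infty_{xy}}$ already present in the space; both variants sit inside the same Kenig--Ponce--Vega scheme.
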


\begin{thm}[Theorem 1.1 in \cite{LinaresPastor}]\label{LWP2} Assume $k\in \mathbb{N}$ and  $3\leq k\leq 7.$ For any $u_0\in H^s(\mathbb{R}^2),$ $s> 3/4$, there exist $T=T(\norm{u_0}_{H^s})>0$ and a unique solution of the IVP \eqref{mZK}, say $u(\cdot),$ defined in the interval $[0,T]$ such that 
\begin{align*}
& u\in C\left([0,T]; H^s(\mathbb R^2) \right),\\
& \norm{D_x^su_x}_{L^{\infty}_x L^2_{yT}} +\norm{D_y^su_x}_{L^{\infty}_x L^2_{yT}} < \infty,\\
& \norm{u}_{L^{p_k}_T L^{\infty}_{xy}} + \norm{u_x}_{L^{12/5}_T L_{xy}^{\infty}} < \infty,
\end{align*}
 and 
 \begin{equation*}
 \norm{u}_{L_x^4 L_{yT}^{\infty}}< \infty,
 \end{equation*}
 where $p_k=\frac{12(k-1)}{7-12\gamma}$ and $\gamma \in (0,1/12).$ 
\end{thm}

 Finally, in the Section \ref{LbrBU}  we will assume that there exists a blow-up solution in finite time. Then, collecting all the previously obtained estimates, we will obtain \eqref{LBR-BU} and \eqref{LBR-BUk} for $k=2$ and $k\geq 3$, respectively.
 
 %\subsection*{Organization of this work} 
 
 \subsection*{Acknowledgments} {\color{teal}Part of this work was carried out during a research visit to Georgia Tech (USA). I am deeply grateful to Professor Gong Chen for having proposed this interesting problem and for his hospitality throughout my stay. I am also indebted to Professor Claudio Muñoz for the stimulating discussions, valuable suggestions, constant support, kind comments, and advice that made it possible to obtain a high-quality version of this work.}
 
 \section{Linear estimates}\label{linear_estimates}

 \subsection*{Notation} For $\alpha \in  \mathbb{C}$, the operators $D^{\alpha}_x $ and $D^{\alpha}_y$ are defined via Fourier transform by $\widehat{D^{\alpha}_x f}(\xi \eta)=|\xi|^{\alpha}\hat f (\xi.\eta)$ and $\widehat{D^{\alpha}_y f}(\xi \eta)=|\eta|^{\alpha}\hat f (\xi ,\eta)$. The mixed space-time norm is defined by (for 1 $\leq p,q,r < \infty$)
 \begin{equation*}
 \norm{f}_{L_x^pL_y^qL_T^r}=\left(\int_{-\infty}^{+\infty} \left(\int_{-\infty}^{+\infty}\left(\int_{0}^{T}|f(t,x,y)|^rdt \right)^{q/r}dy \right)^{p/q}dx\right)^{1/p},
 \end{equation*}
with natural modification if either $p=\infty$, $q= \infty,$ or $r=\infty.$ 

\subsection{Preliminaries} Consider the Duhamel representation of the solution of the mZK equation \eqref{gZK}, that takes the form:
\begin{equation}\label{solD}
u(t,\cdot,\cdot)= U(t)u_0+\int_0^tU(t-t')(u^ku_x)(t')dt'.
\end{equation}
Define the metric spaces
\[ 
\mathcal{Y}_{T}=\{u\in C([0,T]; H^{s}(\mathbb{R}^2)) ~ | ~ \vertiii{u}< \infty \}, 
\]
and 
\[ 
\mathcal{Y}_{T}^a=\{u\in \mathcal{Y}_T; \vertiii{u}< a \},
\]
where $\mathcal{Y}_T^{a}$ will be define later.\\

Now we consider $U(t)$ above. Having these spaces and \eqref{solD} in mind, consider the simpler linear IVP
\begin{equation}\label{lineareq}
\begin{cases}
&u_t+ \partial_x \Delta u=0, \quad \quad (x,y)\in \mathbb R^2 \quad t\in \mathbb R,\\
&u(0,x,y)=u_0(x,y).
\end{cases}
\end{equation} 
The solution of the \eqref{lineareq} is given by the unitary group $\{U(t) \}_{t=-\infty}^{\infty} $:% such that
\begin{equation}\label{solF}
u(t)=U(t)u_0(x,y)=\int_{\mathbb{R}^2} e^{i(t(\xi^3+\xi \eta^2 )+x\xi+y\eta)}\hat{u}_0(\xi,\eta)d\xi d\eta.   %Tfourier en R2
\end{equation}
The estimates associated with the solution \eqref{solF} are well-known and can be seen in detail in \cite{Faminskii, Linares2009}. In particular, as we are interested in tracking the time power involved in the estimations obtained in \cite{Faminskii, Linares2009}, we will focus on the important linear estimate in Lemma \ref{teoF}, from \cite[Theorem 2.4]{Faminskii}. First, it is necessary to follow the details of the following auxiliary result, see \cite[Lemma 2.2]{Faminskii}.

\begin{lem}\label{lemma_aux}
For any $T>0$ and $k\geq 0$ there exist a constant $c>0$ and a function $H_{k,T}(x)>0$ such that
\begin{equation}\label{estimate_aux0}
\int_0^{+\infty} H_{k,T}(x)dx \leq {\color{black} c\cdot T^{1/4}} 2^{3k/2}(k+1)^2,
\end{equation}
and 
\begin{equation}\label{estimate_aux}
\left|\iint \exp(i(t\xi^3+t\xi \eta^2+x\xi+y\eta))\psi_1(\xi)\psi_2(\eta)d\xi d\eta \right| \leq H_{k,T}(|x|),
\end{equation}
for $|t|\leq T$ and $(x,y)\in \mathbb R^2,$ where $\psi_1(\xi)=\mu(a-|\xi|)$ and $\psi_2(\eta)=\mu(b-|\eta|)$ for any $a,b \leq 2^{k+1}.$
\end{lem}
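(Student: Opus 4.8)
The plan is to read \eqref{estimate_aux} as a frequency-localized stationary-phase estimate for the kernel
\begin{equation*}
K_t(x,y) := \iint e^{i(t\xi^3 + t\xi\eta^2 + x\xi + y\eta)}\psi_1(\xi)\psi_2(\eta)\,d\xi\,d\eta ,
\end{equation*}
and to build $H_{k,T}$ as the supremum of $|K_t(x,y)|$ over $|t|\le T$ and $y\in\mathbb R$, viewed as a function of $|x|$ alone. Since $K_{-t}(x,y)=\overline{K_t(-x,-y)}$ and $\psi_1,\psi_2$ are real, this supremum is automatically even in $x$, so it suffices to treat $t=-\tau$ with $\tau\in(0,T]$. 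The estimate will then be obtained by iterating one-dimensional van der Corput bounds, keeping every constant explicit in $\tau$ and in the frequency scale $2^k$ (recall $\supp\psi_1,\supp\psi_2\subset\{|\cdot|\lesssim 2^{k}\}$, and that the fixed cutoff $\mu$ has $O(1)$ supremum and variation).

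First I would integrate in $\eta$. Completing the square, $t\xi\eta^2+y\eta=t\xi(\eta+\tfrac{y}{2t\xi})^2-\tfrac{y^2}{4t\xi}$, so the inner integral $J(\xi)=\int e^{i(t\xi\eta^2+y\eta)}\psi_2(\eta)\,d\eta$ has quadratic phase with $\partial_\eta^2=2t\xi$. The second-derivative van der Corput lemma gives $|J(\xi)|\lesssim |t\xi|^{-1/2}$ with a constant controlled by $\norm{\psi_2}_\infty+\text{Var}(\psi_2)=O(1)$, and this bound is \emph{uniform in $y$} because the van der Corput constant ignores the linear term; the complementary trivial bound $|J(\xi)|\lesssim|\supp\psi_2|\lesssim 2^k$ handles $\xi$ near $0$. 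Inserting $J$ leaves a $\xi$-integral whose phase is, in the two-dimensional picture, the full $\Phi=t\xi^3+t\xi\eta^2+x\xi+y\eta$ with $\det\mathrm{Hess}=4t^2(3\xi^2-\eta^2)$; the stationary points solve $\tau(3\xi^2+\eta^2)=x$ and $2\tau\xi\eta=y$, so they lie in $\supp(\psi_1\psi_2)$ only for $0<x\lesssim\tau2^{2k}$. Beyond that range there is no stationary point, and repeated integration by parts yields rapid decay in $|x|$, securing integrability of the tail.

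Off the caustic $3\xi^2=\eta^2$, two-dimensional stationary phase gives $|K_t(x,y)|\lesssim(\tau\sqrt{|3\xi_s^2-\eta_s^2|})^{-1}$; the supremum over $y$ at fixed $x$ drives the stationary point along the ellipse $3\xi^2+\eta^2=x/\tau$ toward the caustic, where $\det\mathrm{Hess}$ vanishes. There I would switch to a fold (Airy) analysis: diagonalize the Hessian, apply the second-derivative van der Corput in the nondegenerate direction (gain $\sim(\tau\sqrt{x/\tau})^{-1/2}=(\tau x)^{-1/4}$) and the third-derivative van der Corput in the degenerate one (where $\partial_\xi^3\Phi\sim\tau$, gain $\tau^{-1/3}$), producing the $y$-uniform majorant $\sim\tau^{-7/12}|x|^{-1/4}$ on the bulk $|x|\lesssim\tau2^{2k}$. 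Integrating, $\int_0^{\tau2^{2k}}\tau^{-7/12}|x|^{-1/4}\,dx\sim\tau^{1/6}2^{3k/2}$, which already exhibits the decisive factor $2^{3k/2}$ and a positive power of time; monotonicity in $\tau$ replaces $\tau$ by $T$. The polynomial factor $(k+1)^2$ comes from a dyadic decomposition in frequency: splitting the supports (and the caustic neighborhood) into $\sim k$ annuli $|\xi|\sim2^{-j}$ and summing the variation-dependent van der Corput constants, once for each of the two integrations, contributes a factor $(k+1)$ twice. Collecting bulk and tail then defines $H_{k,T}$ and yields \eqref{estimate_aux0}--\eqref{estimate_aux}.

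The main obstacle is the degenerate fold set $3\xi^2=\eta^2$: there ordinary two-dimensional stationary phase fails, and the true pointwise size of the kernel must be extracted through the Airy/third-derivative estimates, while simultaneously every constant has to be kept independent of $y$ and monotone in $|t|$ so that the supremum defining $H_{k,T}$ is legitimate. The accompanying dyadic bookkeeping — and the mild losses it introduces, which weaken the clean heuristic exponent $\tau^{1/6}$ to the stated (sufficient) $T^{1/4}$ together with the $(k+1)^2$ — is the part demanding the most care.
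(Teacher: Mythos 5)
Your sketch reproduces the broad architecture of the paper's argument (which follows Faminskii): view the kernel as an oscillatory integral, integrate in $\eta$ first, then run a stationary/non-stationary phase analysis in $\xi$ whose worst region produces exactly the decay $t^{-7/12}|x|^{-1/4}$ and, after integration over the bulk $|x|\lesssim t\,2^{2k}$, the factor $2^{3k/2}$. But there are two genuine gaps. First, your opening move --- a second-derivative van der Corput bound $|J(\xi)|\lesssim |t\xi|^{-1/2}$ for the $\eta$-integral --- takes absolute values in $\eta$ and therefore destroys the joint phase; you cannot afterwards invoke two-dimensional stationary phase or the Hessian $4t^2(3\xi^2-\eta^2)$, because the $\eta$-oscillation is already gone. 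The paper avoids this by evaluating the $\eta$-integral \emph{exactly} (a Fresnel integral), which converts the kernel into $\int\Phi(y-z)J_1(t,x,z)\,dz$ with $\Phi=\mathcal F^{-1}_y(\psi_2)$ and a one-dimensional phase $t\xi^3+x\xi-z^2/(4\xi t)$ that still remembers $y$ through $z$; the whole case analysis ($z^2\gtrless x^2/4$, the regions $\Omega_1,\dots,\Omega_5$) and the factor $(k+1)$ from $\int|\Phi|$ hinge on retaining that phase. (The second $(k+1)$ then comes from integrating the $2^{3k/2}|x|^{-1}$ contribution over a dyadic-length range, not from summing van der Corput constants over annuli.)

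Second, and more seriously, you dispose of the neighbourhood of $\xi=0$ with the trivial bound $|J(\xi)|\lesssim 2^k$, but that region is precisely where the lemma's advertised content --- the explicit factor $T^{1/4}$ --- is extracted. A trivial bound there carries no decay in $x$ and cannot be integrated over the bulk range with the right constant; the paper must run a separate ``principal estimate'' for the low-frequency block $\psi_0=\mu(2-|\xi|)$, splitting at $|x|=128T$ and at $|\xi|=|x|^{-1/2}$, and the $T^{1/4}$ emerges from the boundary terms and from $t^{1/2}|x|^{-2}(|x|/6t)^{3/4}\lesssim T^{1/4}t^{-1/2}|x|^{-3/4}$ on $\Omega_{04}$ --- not, as in your accounting, from relaxing the heuristic $\tau^{1/6}$ produced by the bulk integral. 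Note also that $\tau^{1/6}\le T^{1/4}$ only under the normalization $T\ge 1$, and since pinning down the time power is the stated purpose of the lemma, this step cannot be absorbed into ``mild losses'': it is the part of the proof that actually has to be done.
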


The main contribution here is the explicit power $T^{1/4}$ above, which is not clear from \cite{Faminskii} and needs to be carefully worked. Here, the function $\mu(x)$ denotes a nondecreasing infinitely differentiable function defined on $\mathbb R$ such that $\mu(x)=0$ for $x\leq 0$ and $\mu(x)=1$ for $x \geq 1.$ Note that, with this definition, the function $\mu(a-|\xi|)$ is a cut-off function defined as
\[
\mu(a-|\xi|)=\begin{cases} &1, \quad |\xi|\leq a-1\\
& 0, \quad |\xi|\geq a. \end{cases} 
\]
In addition, $\mu(|\xi|-b)$ is given by 
\[\mu(|\xi|-b)=\begin{cases} &1, \quad |\xi|\geq b+1\\
& 0, \quad |\xi|\leq b. \end{cases} \]%\begin{rem}
%As we have already mentioned, what we are interested in is to understand what is the explicit power for $T $ that describes the Lemma \ref{estimate_aux}. With that aim, we will start by analyzing just the case where the corresponding powers of $T$ appears. The details of the proof can be seen in \cite[Lemma 2.2]{Faminskii}.
%Note that in the statement of the lemma, they are proving the result for each $a,b\leq 2^{k+1}$, however, the cases where the powers of $T$ appear, correspond to the case $a=2$, on which we will focus, i.e., we include the details of this estimate for completeness.  However we will give some comments regarding the cases in general.
%\end{rem}
\begin{proof}[Proof of the Lemma \ref{lemma_aux}]
We closely follow Faminskii \cite{Faminskii}. Without loss of generality, one can assume that $T \geq 1$ and $t\geq 0$.  %First, consider the region $-32t\cdot 2^{2k}< x < -2^{-k/2}$
By $J(t,x,y)$ we denote the analog of the left-hand side in \eqref{estimate_aux} in which the function $\psi_1= \mu(a-|\xi|)$ is replaced by $\psi_1=\mu(a-|\xi|)\mu(|\xi|-1)$ (then $\psi_1(\xi)=0$ for $|\xi|\leq 1$), i.e., 
\begin{equation*}
J= \iint \exp(i(t\xi^3+t\xi \eta^2+x\xi+y\eta))\psi_1(\xi)\psi_2(\eta)d\xi d\eta.
\end{equation*}
For $|x|\leq 2^{-k/2}$ we use the  inequality 
\begin{equation}\label{caso1_a}
|J| \leq  c\cdot 2^{2k},
\end{equation}
which corresponds to the size of the domain in this region. Next, suppose that either $x\geq 2^{-k/2}$, or $x<-\max\{2^{-k/2},32t\cdot 2^{2k} \}$. Set $\varphi_1(\xi,\eta):= t\xi^3+t\xi\eta^2+x\xi;$ then $(\varphi_1)_\xi=t(3\xi^2+\eta^2)+x$ and $|(\varphi_1)_{\xi}| \geq \max \{t(3\xi^2+\eta^2),|x|/2 \}$ for
 $(\xi,\eta)\in \supp \psi_1(\xi)\psi_2(\eta)$. Integrating by parts, we obtain 
\begin{equation}\label{J:IPP}
\begin{aligned}
J&= \int \psi_2(\eta)e^{iy\eta}\left( \int e^{i\varphi_1}\psi_1(\xi)d\xi\right)d\eta\\
&=\int \psi_2(\eta)e^{iy\eta}\left(-\int\left(\frac{\psi_1(\xi)}{i(\varphi_1)'} \right)'e^{i\varphi_1}d\xi \right)d\eta.
\end{aligned}
\end{equation}
Then
\begin{equation*}\label{caso1_b}
\begin{aligned}
|J|\leq &\int \psi_{2}(\eta)\int \left|\left(\frac{\psi_1(\xi)}{i(\varphi_1(\xi))_{\xi}}\right)' \right|d\xi d\eta \\
\leq & \int \psi_{2}(\eta)\int \left( \left| \frac{\psi'_1}{\varphi'_1}\right| + \left| \frac{\psi_1 \varphi''_1}{(\varphi'_1)^2}\right| \right)d\xi d\eta \\
\lesssim  &  \int \psi_{2}(\eta)\int  (\psi'_1 |x|^{-1}+\psi_1 t |\xi||x|^{-2}) d\xi d\eta \leq  c\cdot 2^k|x|^{-1} \quad \mbox{for} \quad 2^{-k/2}\leq |x|\leq 1.
\end{aligned}
\end{equation*}
 For the remaining case, integrating by parts again in \eqref{J:IPP}, and simplifying, we get
\begin{equation}\label{caso1_c}
|J|\leq \int \psi_{2}(\xi)\int \left|\left( \frac{1}{\varphi'_1}\left(\frac{\psi_1}{\varphi'_1}\right)' \right)'\right|d\xi d\eta \leq c\cdot 2^k|x|^{-2}\quad \mbox{for}  \quad |x|\geq 1.
\end{equation}
The main difficulty comes from the remaining case $-32t \cdot 2^{2k}<x<-2^{-k/2} .$ Let us reduce $J$ as follws:
\begin{equation}\label{J1}
J= \int \Phi(y-z)\underbrace{\left(\int\left(\frac{\pi}{t|\xi|} \right)^{1/2}\exp\left(i\left( t\xi^3+x\xi-\frac{z^2}{4\xi t}+\frac{\pi}{4}\sgn \xi \right) \right)\psi_1(\xi)d\xi \right)}_{J_1}dz,
\end{equation}
where  $\Phi \equiv \mathcal{F}^{-1}_y(\psi_2).$ Let us estimate the inner integral $J_1(t,x,y)$ with respect to $\xi$ on the right-hand side in \eqref{J1}. 

\medskip

Case $z^2\geq x^2/4.$ Let us split the real axis into two parts
\begin{equation*}
\Omega_1=\{ \xi: \xi^2>|x|/(32t)\}, \quad \quad \mbox{and} \quad \quad 
\Omega_2= \{ \xi: \xi^2<|x|/(32t)\}.
\end{equation*}
From now on, the integrals $J_{1i}$ will correspond to the integral $J_1$ on each of the regions $\Omega_i$. If $\varphi(\xi)= t\xi^3+x\xi-z^2/(4\xi t)$, after the analysis for this region \cite{Faminskii} one gets
\begin{equation}\label{caso2_a}
\begin{cases}
&|J_{11}|\leq ct^{-7/12}|x|^{-1/4}\leq c_1\cdot 2^{7k/6}|x|^{-5/6},\\
& \displaystyle{ |J_{12}|\leq c \frac{(t|\xi|^{-1/2})}{|\varphi'|}\biggm|_{\partial \Omega_{2}}+c\int_{\Omega_2} \left|\left(\frac{|t\xi|^{-1/2}\psi_1}{\varphi'} \right)' \right|d\xi\leq c_12^{k/2}|x|^{-3/2}.}
\end{cases}
\end{equation}
Case $z^2\leq x^2/4.$ Let $z^2=px^2,$ $0\leq p \leq 1/4.$ Consider the following domains:
\[ 
\Omega_3=\{ \xi: \xi^2>|x|/(6t)\}, \quad   \Omega_4= \{ \xi:p|x|/(2t)< \xi^2\leq|x|/(6t)\}, \quad  \mbox{and}\quad   \Omega_5=\{ \xi: \xi^2<p|x|/(2t)\}.  
\] 
For these regions, we have 
\begin{equation}\label{caso2_b}
\begin{cases}
&|J_{13}|\leq c2^{7k/6}|x|^{-5/6},\\
&|J_{14}|\leq c_12^{k}|x|^{-3/2},\\
&|J_{15}|\leq c_1 2^{3k/2}|x|^{-1}.\\
\end{cases}
\end{equation}
Using the inequality $\int|\Phi(y)|dy\leq c(k+1)$ and the estimates \eqref{caso2_a}-\eqref{caso2_b}, from the equation \eqref{J1} we obtain
\begin{equation}\label{caso2_f}
|J|\leq c(k+1)(2^{7k/6}|x|^{-5/6}+2^{k}|x|^{-3/2}+2^{3k/2}|x|^{-1}),
\end{equation}
for $-32t\cdot 2^{2k}< x< -2^{-k/2}$ and for any $y$. Combining inequalities \eqref{caso1_a}-\eqref{caso1_c} and \eqref{caso2_f} yields the estimate \eqref{estimate_aux} for $J$. \\

\medskip
%%%%%%%%%%%%%%%%%%%
\noindent \textit{\textbf{Principal estimate:} } This is the main contribution of this work. Let us return to the structure for $J$ in \eqref{J1}, but now consider the case of 
\begin{equation}\label{J0}
J_0=\int \Phi(y-z)\left(\int\left(\frac{\pi}{t|\xi|} \right)^{1/2}\exp\left(i\left( t\xi^3+x\xi-\frac{z^2}{4\xi t}+\frac{\pi}{4}\sgn \xi \right) \right)\psi_0(\xi)d\xi\right)dz ,
\end{equation}
where $\Phi \equiv \mathcal{F}^{-1}_y(\psi_2),$ % First, estimate the integral $J_0$ that contains 
and  the function $\psi_0(\xi)\equiv \mu(2-|\xi|)$, instead of $\psi_1(\xi)$ ocurring with $J$ in \eqref{J1},  then we have
   \begin{equation}
|J_0|\leq c\cdot 2^{k}, \quad \quad \mbox{for} \quad |x| \leq 128T,
\end{equation}
and 
\begin{equation}
|J_0|\leq c \cdot 2^kx^{-2}, \quad \quad \mbox{for} \quad x \geq 128T \quad \mbox{or}\quad x\leq -\max(128T,32t\cdot 2^{2k}).
\end{equation}
But if $-32t\cdot 2^{2k}< x < -128T,$ then we  must estimate the inner integral $J_{0i}(t,x,z)$ with respect to $\xi$ on the right-hand side in \eqref{J0}, that contains the function $\psi_0(\xi)\equiv \mu(2-|\xi|)$ instead of $\psi_1(\xi)$ ocurring in $J$, and will be analyzed in different regions $\Omega_{0i}$, $i\in \{1,2,3,4 \} $, which will be described below.  That is
\begin{equation*}
J_{0i}:=\int_{\Omega_{0i}}\left(\frac{\pi}{t|\xi|} \right)^{1/2}\exp\left(i\left( t\xi^3+x\xi-\frac{z^2}{4\xi t}+\frac{\pi}{4}\sgn \xi \right) \right)\psi_0(\xi)d\xi. 
\end{equation*}
\textbf{Case 1:} Consider $z^2\geq \frac{x^2}{4}$. For this case, let us split the real axis into two 
\begin{equation}
\Omega_{01}= \{ \xi: \xi^2>|x|/(32t)\}, \quad \quad \Omega_{02}=\{ \xi: \xi^2<|x|/(32t) \}.
\end{equation}
Note that if $\xi\in \supp \psi_0$, then $|\xi|<|x|/32t$, i.e. $\Omega_{01}=\emptyset$
and, since $|\xi|^{-1/2}\exp(i\pi\sgn(\xi/4))=(-i\xi)^{-1/2}$, it follows that, similar to \eqref{caso2_a}, integration by parts yields 
\begin{equation*} \label{J01}
|J_0|=\left| \int \Phi(y-z)J_{02}dz \right| \leq cx^{-2}.
\end{equation*}
\textbf{Case 2:} Consider $z^2\leq \frac{x^2}{4}$. Set $\varphi(\xi)=  t\xi^3+x\xi-z^2/(4\xi t)$ and $z^2=px^2,$ $0\leq p \leq 1/4$. We divide the real axis into three parts (here $\Omega_{03}=\{\xi:\xi^2>|x|/(6t) \}=\emptyset$):
\begin{equation}
\begin{cases}
 &\Omega_{04}:= \Omega_4 \cap \{ \xi: |\xi|\geq |x|^{-1/2}\}; \quad \Omega_4= \left\{\xi: \frac{p|x|}{2t}< \xi^2 \leq \frac{|x|}{6t}\right\}, \\
& \Omega_{05}:= \Omega_5 \cap \{ \xi: |\xi|\geq |x|^{-1/2}\}; \quad \Omega_5= \left\{\xi: \xi^2< \frac{p|x|}{2t}\right\},\\
& \Omega_{06}:= \{\xi: |\xi|\leq |x|^{-1/2} \}.
 \end{cases}
\end{equation}
% Now we can  divide the integral with respect to these regions as
%\begin{equation}
%J_0=J_{04}+J_{05}+J_{06}.
%\end{equation}
Let us start estimating $J_{04}$, that it is, the integral in the region $\Omega_{04}.$ Note that $\varphi'(\xi)=  3 t\xi^2 +x + \frac{z^2}{4\xi^2 t} = 3 t\xi^2 +x  + \frac{px^2}{4\xi^2 t}$ and 
\begin{equation}\label{bound_varpip}
\varphi'\left(\left(\frac{p|x|}{(2t)}\right)^{1/2}\right)
=\varphi'\left(\left(\frac{|x|}{(6t)}\right)^{1/2}\right)
=|x|\frac{3p-1}{2} \leq- \frac{|x|}{8}.
\end{equation}
Now, $\varphi'''= 6t+\frac{3z^2}{2t}\xi^{-4}>0$. It follows that $|\varphi'(\xi)|\geq \frac{|x|}{8}$ for $\xi \in \Omega_{4}$,  thus also in $\Omega_{04}$ and
\begin{equation}\label{bound_varpipp}
|\varphi''|=\left |6t\xi-\frac{1}{2}\frac{p}{t}\frac{|x|^2}{\xi^3}\right|\leq |6t\xi|+3\frac{|x|}{|\xi|}\leq c\left(t|\xi|+\left|\frac{x}{\xi}\right|\right).
\end{equation}
Also, we can write $\Omega_{04}$ as follows:
 \[ 
 \Omega_{04}= [-b,-a]\cup [a,b],\quad \mbox{with} \quad a:=  \max\left\{\left(\frac{p|x|}{2t}  \right)^{1/2},|x|^{-1/2}  \right\}\quad  \mbox{and}\quad  b:=  \left(\frac{|x|}{6t} \right)^{1/2}.
 \]
Since $ |\xi|^{-1/2} \exp(i\pi \sgn \xi/4)=(-i\xi)^{-1/2}$, then the integral to be estimated can be written as 
\begin{equation*}
J_{04}=\int_{\Omega_{04}}\frac{\pi^{1/2}}{t^{1/2}}(|\xi|)^{-1/2}\exp(i\varphi(\xi))\psi_0 d\xi.
\end{equation*}
By integration by parts 
\begin{equation*}
|J_{04}|\lesssim \pi^{1/2}\frac{|(t|\xi|)^{-1/2}\psi_0(\xi)|}{|\varphi'(\xi)|}\biggm| _{\partial \Omega_{04}} + \int_{\Omega_{04}}\left |\left(\frac{|t\xi|^{-1/2}\psi_0(\xi)}{\varphi'(\xi)} \right)'\exp(i\varphi \xi) \right|d\xi.
\end{equation*}
Let us estimate the integral of the right-hand side of the inequality. First, we have for the derivative
\begin{equation*}
\begin{aligned}
&\left|\frac{d}{d\xi}\left(\frac{(t |\xi|)^{-1/2}\psi_0(\xi)}{\varphi'(\xi)}\right)\right| \\
&=\frac{1}{\varphi'(\xi)^{2}}\left( \left( -\frac12t^{-1/2}\frac{\xi}{|\xi|}\xi^{-3/2}\psi_0+(t|\xi|)^{-1/2}\psi_0' \right)\varphi'(\xi)-(t|\xi|)^{-1/2}\psi_0\varphi''(\xi) \right).
\end{aligned}
\end{equation*}
%%%%%%%%%%%%%%%%%%%%%%%%%%%%%%%%%%%%%%% Nuevo
 Using  \eqref{bound_varpip}-\eqref{bound_varpipp} we get
\begin{equation*}
\begin{aligned}
\int_{\Omega_{04}}\left |\left(\frac{|t\xi|^{-1/2}\psi_0(\xi)}{\varphi'(\xi)} \right)'\exp(i\varphi \xi) \right|d\xi \lesssim &~{} |x|^{-1}t^{-1/2}\int_{\Omega_{04}} |\xi|^{-3/2}\psi_0d\xi 
+  |x|^{-1}t^{-1/2}\int_{\Omega_{04}} |\xi|^{-1/2}\psi'_0 d\xi \\
&+  |x|^{-2}t^{-1/2}\int_{\Omega_{04}} |\xi|^{-1/2}\left(t|\xi|+\frac{|x|}{|\xi|}\right)\psi_0d\xi\\
= &~{} I_1+I_2+I_3.
\end{aligned}
\end{equation*}
We are going to estimate each integral $I_i$, $i\in \{1,2,3\}$, conveniently using the definition of the domain as follows:
\begin{equation}\label{Int1}
I_1\leq  |x|^{-1}t^{-1/2}\int_{\small{|x|^{-1/2}}}^2 |\xi|^{-3/2}d\xi\lesssim  |x|^{-1}t^{-1/2}|x|^{1/4}= t^{-1/2}|x|^{-3/4}.
\end{equation} 
Next for $I_2$ we have
\begin{equation}\label{Int2}
I_2\leq  |x|^{-1}t^{-1/2}\int_{1}^2 |\xi|^{-1/2}d\xi \lesssim |x|^{-1}t^{-1/2}|x|^{1/4}=  t^{-1/2}|x|^{-3/4}.
\end{equation}
We have used that $T\geq 1$ and $x < -128T$. For $I_3$ we get
\begin{equation}\label{Int3}
\begin{aligned}
I_3\leq&~{}  t^{1/2}|x|^{-2}\int_{|x|^{-1/2}}^{\small{\left(\frac{|x|}{6t}\right)^{1/2}}}|\xi|^{1/2}d\xi+ t^{-1/2}|x|^{-1}\int_{|x|^{-1/2}}^2|\xi|^{-3/2}d\xi\\
\lesssim &~{} t^{1/2}|x|^{-2}\left(\frac{|x|}{6t} \right)^{3/4}+   t^{-1/2}|x|^{-3/4}\leq (T^{1/4}+1) t^{-1/2}|x|^{-3/4}.
\end{aligned}
\end{equation}
Again, we have used that $T\geq 1$ and $x < -128T$. Finally, collecting estimates \eqref{Int1}-\eqref{Int3} we can conclude 
\begin{equation}\label{Ip1}
\int_{\Omega_{04}}\left |\left(\frac{|t\xi|^{-1/2}\psi_0(\xi)}{\varphi'(\xi)} \right)'\exp(i\varphi \xi) \right|d\xi \lesssim  (T^{1/4}+1)t^{-1/2}|x|^{-3/4}.
\end{equation}
%%%%%%%%%%%%%%%%%%%%%%%%%%%%%%%%%%%%%%%%%%%%%%%%
Now, $\partial \Omega_{04}=\left\{-a,-b,a,b \right\}$ , with $a:= \max\left\{\left(\frac{p|x|}{2t}  \right)^{1/2},|x|^{-1/2}  \right\}$ and $b:=  \left(\frac{|x|}{6t} \right)^{1/2}$. 
Then, using \eqref{bound_varpip}, for the boundary term we have
\begin{equation}\label{borde}
\begin{aligned}
\pi^{1/2}\frac{|(t|\xi|)^{-1/2}\psi_0(\xi)|}{|\varphi'(\xi)|}\biggm| _{\partial \Omega_{04}}\lesssim &~{}  |x|^{-1} t^{-1/2} |x|^{1/4}+  |x|^{-1} t^{-1/2}\left( \frac{|x|}{6t}\right)^{-1/4} \\
\lesssim &~{}   |x|^{-3/4} t^{-1/2} +  |x|^{-1} \left( \frac{|x|}{6t}\right)^{-1/4}t^{-1/2}\lesssim T^{1/4}t^{-1/2}|x|^{-3/4}.
\end{aligned}
\end{equation}
So then, by collecting \eqref{Ip1}-\eqref{borde} we get 
\begin{equation*}\label{J04}
|J_{04}|\lesssim T^{1/4}t^{-1/2}|x|^{-3/4}.
\end{equation*}
The remaining integrals can be estimated as in \cite{Faminskii} to get:
\begin{align}
&|J_{05}|\leq c t^{-1/2}|x|^{-1/4},\nonumber \\%\label{J05} \\
& |J_{06}|\leq c t^{-1/2}|x|^{-1/4},\label{J06}
\end{align}
Combining the estimates \eqref{J01}-\eqref{J06}, we obtain, similarly to \eqref{caso2_f}:
\begin{equation*}
|J_0|\leq c\cdot T^{1/4}(k+1)t^{-1/2}|x|^{-1/4}\lesssim c\cdot T^{1/4}(k+1)2^k|x|^{-3/4}. 
\end{equation*}
\end{proof}

Now we are able to give a time quantitative version of Theorem 2.4 in \cite{Faminskii}.

\begin{lem}\label{lemma_aux1} Let $u_0\in H^s$ for some $s>3/4$. Then
\begin{equation}\label{aux1}
\norm{U(t)u_0}_{L^2_xL_{yT}^{\infty}}\leq c(s)T^{1/8}\norm{u_0}_{H_{xy}^s},
\end{equation}
where $c(s)$ is a constant depending on $s$. 
\end{lem}
\begin{proof}
Let us introduce the sequence of functions $\psi_k(\xi,\eta)$ as follows: 
\begin{enumerate}
\item $\psi_0(\xi,\eta)=\mu(2-|\xi|)\mu(2-|\eta|)$, and
\item {\color{black} $\psi_k(\xi,\eta)=\mu(2^{k+1}-|\xi|)\mu(2^{k+1}-|\eta|)\mu(|\eta|-2^{k}+1)+\mu(2^{k+1}-|\xi|)\mu(|\xi|-2^{k}+1)\mu(2^k-|\eta|),$ } for $k\geq 1.$ 
\end{enumerate}
Note that, using induction, we can show that 
\[ \sum_{k=0}^{m} \psi_k(\xi,\eta) =\mu(2^{m+1}-|\xi|)\mu(2^{m+1}-|\eta|), \quad \mbox{with} \quad \supp \left( \sum_{k=0}^{m} \psi_k(\xi,\eta) \right)=[-2^{m+1}, 2^{m+1}],\]
thus  we can conclude that   $\sum_{k=0}^{\infty} \psi_k(\xi,\eta) \equiv 1.$

\medskip

For any function $f\in L^2,$ we set $B_kf=\mathcal{F}^{-1}({\color{black}\psi_k^{1/2}}\hat f).$ Note that $\norm{B_ku_0}_{L^2}\leq c\cdot 2^{-ks}\norm{u_0}_{H^s}$. If $g\in C_0^{\infty}(\mathbb R)^3$, then, by \eqref{estimate_aux}, we have
\begin{equation}\label{inq_theo}
\left| \int_{-T}^{T}(U(t-\tau))B_k^2g(\tau,\cdot,\cdot)(x,y)d\tau\right| \leq \left( H_{k,2T}(|\cdot|)*\int_{-T}^T\int|g(\tau,\cdot,y)|d\tau dy\right)(x),
\end{equation}
for $|t|\leq T.$
Let us introduce the operators  $A_k$ acting from $L^1([-T,T]; L^2) $ as follows: \[A_k=\int \chi_{T}(\tau)U(-\tau)\times B_kg(\tau,\cdot,\cdot)d\tau.\]
Then, by virtue the unitary of $U(t),$ the self-adjointness of $B_k,$ and the permutability of $U(t)$ and $B_k,$ the operator $A^{*}_k$ (acting from $L^2$ into $L^{\infty}([-T,T]; L^2) $) is specified by the formula $A^*_kv=U(t)B_kv.$

\medskip

Set $X= L_x^2(\mathbb R; L^1_{ty}([-T,T]\times \mathbb R));$ then $X^*= L_x^2(\mathbb R; L^{\infty}_{ty}([-T,T]\times \mathbb R))$ and, by virtue of \eqref{estimate_aux0} and \eqref{inq_theo}, one has
\[
\begin{aligned}
\norm{A_k^*A_kg}_{X^*}= &~{} \norm{\int_{-T}^TU(t-\tau)B^2g(\tau,\cdot,\cdot))(x,y)d\tau}_{X^*}\\
\leq &~{}  \int H_{k,2T}(|x|)dx\norm{g}_X\leq cT^{1/4}2^{3k/2}\norm{g}_X,
\end{aligned}
\]
for $g\in C_0^{\infty}(\mathbb{R}^3)$.  Therefore, by Lemma 2.1 in \cite{GV}, we have
\[ 
\norm{U(t)B_k^2v_0}_{X^*} \leq c^{1/2}T^{1/8}2^{3k/4}(k+1)\norm{B_ku_0}_{L^2}\leq c^{1/2}T^{1/8}2^{-k(s-3/4)}(k+1)\norm{u_0}_{H^s}.\]
Thus, we obtain 
\begin{equation*}
\begin{aligned}
 \norm{U(t)u_0}_{X^{*}}&\leq c\left(\sum_{k=0}^{\infty}2^{\varepsilon k}\norm{U(t)B_k^2v_0}_{X^*}^2 \right)^{1/2}\\
& \leq c\left(\sum_{k=0}^{\infty}2^{\varepsilon k}(c^{1/2}T^{1/8}2^{-k(s-3/4)}(k+1)\norm{u_0}_{H^s})^2 \right)^{1/2}\\
&\leq c\cdot T^{1/8}\left(\sum_{k=0}^{\infty}2^{\varepsilon k}2^{-k(2s-3/2)}(k+1)^2\right)^{1/2}\norm{u_0}_{H^s}\leq c_1(s)T^{1/8}\norm{u_0}_{H^s},
 \end{aligned}
\end{equation*}
if $0< \varepsilon < 2s-3/2$. 
\end{proof}

\section{Nonlinear estimates}\label{Nonlinear_estimates}
As mentioned before, the intention of this section is to trace the powers of $T$ that appear in the non-linear estimation associated to the equation \eqref{mZK}, for this, in the case $k=2$, we reanalyze the LWP result of Linares and Pastor \cite{Linares2009} and for $k\geq 3$ we taking in account the LWP result in \cite{LinaresPastor}. %, as well as, we state some important estimates for the oscillatory integral associated to \eqref{mZK}.
%\begin{rem}
%Linares and Pastor's analysis for the nonlinear estimates, \cite{Linares2009}, provides us with a partially explicit power of $T$. the following demonstration includes the explicit power obtained from the linear estimate. the proof follows directly from \cite[Theorem 1.1]{Linares2009}
%\end{rem}
\subsection{Case $k=2:$ mZK model}
{\color{teal}For  this case consider in the definition of the space $\mathcal{Y}_{T}^a$  the norm defined as
\begin{equation}\label{norm_1}
\vertiii{u}:= \norm{u}_{L^{\infty}_TH^{s}_{xy}}+ \norm{u}_{L^{3}_T L^{\infty}_{xy}}+ \norm{u}_{L^{9/4}_T L^{\infty}_{xy}}+ \norm{D^s_xu_x}_{L^{\infty}_xL^{2}_{yT}}+  \norm{D^s_yu_x}_{L^{\infty}_xL^{2}_{yT}} + \norm{u}_{L^{2}_x L^{\infty}_{yT}},
\end{equation} }

\begin{proof}[Proof of the Theorem \ref{LWP}] We closely follow Theorem 1.1 in \cite{Linares2009}. 
Consider the integral operator:
\begin{equation*}
\Psi(u)= \Psi(u)(t)=U(t)u_0+\int_{0}^{t}U(t-t')(u^2u_x)(t')dt'.
\end{equation*}
We assume that $s>3/4$ and WLOG $T\leq 1$. We start by estimating the $H^s$-norm of $\Psi(u)$. Let $u\in \mathcal{Y}_T$. By using Minkowski's integral inequality, group properties, and H\"{o}lder inequality, we have
\begin{equation}\label{inq_1}
\begin{aligned}
\norm{\Psi(u)(t)}_{L^2_{x,y}}&\leq c \norm{u_0}_{H^s}+c\int_0^T \norm{u}_{L^2_{xy}}\norm{uu_x}_{L^{\infty}_{xy}}dt'\\
&\leq c \norm{u_0}_{H^s}+cT^{2/9}\norm{u}_{L_T^{\infty}L_{xy}^2}\norm{u}_{L_T^{3}L_{xy}^{\infty}}\norm{u_x}_{L_T^{9/4}L_{xy}^{\infty}}
%\\ & \leq c \norm{u_0}_{H^s}+cT^{2/9}\vertiii{u}^3.
\end{aligned}
\end{equation}
Recall the Leibniz rule for fractional derivatives: Let $0< \alpha < 1$ and $1<p<\infty.$ Then:
\begin{equation}\label{L_rule}
\norm{D^{\alpha}(fg)-fD^{\alpha}g-gD^{\alpha}f}_{L^p(\mathbb R)}\leq c\norm{g}_{L^{\infty}(\mathbb R)}\norm{D^{\alpha}f}_{L^p(\mathbb R)},
\end{equation}
where $D^{\alpha}$ denote  $D^{\alpha}_x$ or  $D^{\alpha}_y$.
Now, using group properties, Minkowski and H\"{o}lder inequalities, and twice the Leibniz rule  \eqref{L_rule} we have:
\begin{equation}\label{inq_2}
\begin{aligned}
\norm{D^s_x\Psi(u)(t)}_{L^2_{xy}}&\leq \norm{D_x^su_0}_{L^2_{xy}}+\int_0^T\norm{D^s_x(u^2u_x)(t')}_{L^2_{xy}}dt'\\
& \leq c\norm{u_0}_{H^s}+c\int_0^T\norm{u_x}_{L^{\infty}_{xy}}\norm{D^s_x(u^2)}_{L^2_{xy}}dt'+c\int_0^T\norm{u^2D_x^su_x}_{L^2_{xy}}dt'\\
& \leq c\norm{u_0}_{H^s}+c\int_0^T\norm{u_x}_{L^{\infty}_{xy}}\norm{u_x}_{L^{\infty}_{xy}}\norm{D^s_xu}_{L^2_{xy}}dt'+c\int_0^T\norm{u^2D_x^su_x}_{L^2_{xy}}dt'\\
& \leq c\norm{u_0}_{H^s}+c\norm{u}_{L_T^{\infty}H^s_{xy}}\int_0^T\norm{u_x}_{L^{\infty}_{xy}}\norm{u_x}_{L^{\infty}_{xy}}dt'+c\int_0^T\norm{u^2D_x^su_x}_{L^2_{xy}}dt'.\\
\end{aligned}
\end{equation}
From H\"{o}lder inequality and the argument in \eqref{inq_1}, we get 
\begin{equation}\label{inq_3}
\int_0^T\norm{u_x}_{L_{xy}^{\infty}}\norm{u_x}_{L_{xy}^{\infty}}dt'\leq T^{2/9}\norm{u}_{L_T^{3}L_{xy}^{\infty}}\norm{u_x}_{L_T^{9/4}L_{xy}^{\infty}},
\end{equation}
and 
\begin{equation}\label{inq_4}
\begin{aligned}
\int_0^T\norm{u^2D_x^su_x}_{L^2_{xy}}dt'&\leq \int_0^T\norm{u_x}_{L_{xy}^{\infty}}\norm{uD_x^su_x}_{L^2_{xy}}dt'\\
& \leq \left(\int_0^T \norm{u}_{L_{xy}^\infty}^2dt' \right)^{1/2}\norm{uD_x^su_x}_{L^2_{xyT}}\\
& \leq cT^{1/6}\norm{u}_{L_T^3L_{xy}^{\infty}}\norm{u}_{L_x^2L_{yT}^{\infty}}\norm{D_x^su_x}_{L_x^{\infty}L_{yT}^{2}}.
\end{aligned}
\end{equation}
Thus, combining \eqref{inq_2}, \eqref{inq_3} and \eqref{inq_4}, we obtain 
\begin{equation}\label{inq_5}
\begin{aligned}
\norm{D_x^s\Psi(u)(t)}_{L_{xy}^2}\leq& ~{} c\norm{u_0}_{H^s}+cT^{2/9}\norm{u}_{L_T^{\infty}H_{xy}^s}\norm{u}_{L_T^{3}L_{xy}^{\infty}}\norm{u}_{L_T^{9/4}L_{xy}^{\infty}}\\
&+ cT^{1/6}\norm{u}_{L_T^{3}L_{xy}^{\infty}}\norm{u}_{L_T^{2}L_{yT}^{\infty}}\norm{D_y^su_x}_{L_x^{\infty}L_{yT}^{2}}.
\end{aligned}
\end{equation}
Similarly for $\norm{D_y^s\Psi(u)(t)}_{L_{xy}^2},$
\begin{equation}\label{inq_6}
\begin{aligned}
\norm{D_y^s\Psi(u)(t)}_{L_{xy}^2}\leq&~{} c\norm{u_0}_{H^s}+cT^{2/9}\norm{u}_{L_T^{\infty}H_{xy}^s}\norm{u}_{L_T^{3}L_{xy}^{\infty}}\norm{u}_{L_T^{9/4}L_{xy}^{\infty}}\\
&+ cT^{1/6}||u||_{L_T^{3}L_{xy}^{\infty}}\norm{u}_{L_T^{2}L_{yT}^{\infty}}\norm{D_y^su_x}_{L_x^{\infty}L_{yT}^{2}}.
\end{aligned}
\end{equation}
Therefore, from \eqref{inq_1}, \eqref{inq_5} and \eqref{inq_6}, we deduce
\begin{equation}\label{inq_7}
\norm{\Psi(u)}_{L_T^{\infty}H^s}\leq c\norm{u_0}_{H^s}+cT^{1/6}\vertiii{u}^3.
\end{equation}
Next, from the oscillatory inequality, group properties, and the argument in \eqref{inq_1}, we get  
\begin{equation}\label{inq_8}
\begin{aligned}
\norm{\Psi(u)}_{L_T^{3}L_{xy}^{\infty}}&\leq \norm{U(t)u_0}_{L_T^3L_{xy}^{\infty}}+\norm{U(t)\left( \int_0^t U(-t')(u^2u_x)(t')dt'\right)}_{L_T^3L_{xy}^{\infty}}\\
&\leq  c\norm{u_0}_{L_{xy}^2}+c\int_0^{T}\norm{(u^2u_x)(t')}_{L^2_{xy}}dt'\\
& \leq c\norm{u_0}_{H^s}+cT^{2/9}\vertiii{u}^3.
\end{aligned}
\end{equation}
By choosing $\varepsilon \sim 1/2$ such that $1-\varepsilon/2 \leq s,$ an application of Lemma {\color{black}2.6} in \cite{Linares2009} together with arguments similar to \eqref{inq_5} yield
\begin{equation}
\begin{aligned}
\norm{\partial_x\Psi(u)}_{L_T^{4/9}L_{xy}^{\infty}}\leq&~{} \norm{U(t)\partial_xu_0}_{L_T^{4/9}L_{xy}^{\infty}}\\
& +\norm{U(t)\left(\int_0^{t}U(-t')\partial_x(u^2u_x)(t')dt' \right)}_{L_T^{4/9}L_{xy}^{\infty}}\\
\leq& ~{}c \norm{D_x^{-\varepsilon/2}\partial_xu_0}_{L_{xy}^2}+c\int_0^{T}\norm{D_x^{-\epsilon/2}\partial_x(u^2u_x)(t')}_{L^2_{xy}}dt'\\
\leq&~{} c \norm{u_0}_{H^s}+ c\int_0^T\norm{(u^2u_x)(t')}_{L^2_{xy}}dt'+c\int_0^T\norm{D_x^s(u^2u_x)(t')}_{L^2_{xy}}dt'\\
\leq & ~{}c\norm{u_0}_{H^s}+cT^{1/6}\vertiii{u}^3.
\end{aligned}
\end{equation}
Applying Lemma (2.7.i) in \cite{Linares2009}, group properties, and Minkowski and H\"{o}lder inequalitiies, we obtain 
\begin{equation*}
\begin{aligned}
\norm{D_x^s\partial_x\Psi(u)}_{L_x^{\infty}L_{yT}^2}&\leq \norm{\partial_xU(t)D_x^su_0}_{L_x^{\infty}L_{yT}^2}\\
&\leq \norm{\partial_x U(t)\left(\int_0^t U(-t')D_x^s(u^2u_x)(t')dt' \right)}_{L_x^{\infty}L_{yT}^2}\\
&\leq c\norm{D_x^su_0}_{L_{xy}^2}+c\int_0^T\norm{D_x^s(u^2u_x)(t')}_{L_{xy}^2}dt'\\
&\leq c\norm{u_0}_{H^s}+cT^{1/6}\vertiii{u}^3,
\end{aligned}
\end{equation*}
and
\begin{equation*}
\begin{aligned}
\norm{D_y^s\partial_x\Psi(u)}_{L_x^{\infty}L_{yT}^2}&\leq \norm{\partial_xU(t)D_x^su_0}_{L_x^{\infty}L_{yT}^2}\\
&\leq \norm{\partial_x U(t)\left(\int_0^t U(-t')D_x^s(u^2u_x)(t')dt' \right)}_{L_x^{\infty}L_{yT}^2}\\
&\leq c\norm{D_x^su_0}_{L_{xy}^2}+c\int_0^T\norm{D_x^s(u^2u_x)(t')}_{L_{xy}^2}dt'\\
&\leq c\norm{u_0}_{H^s}+cT^{1/6}\vertiii{u}^3.
\end{aligned}
\end{equation*} 
By an application of Lemma \ref{lemma_aux1}, Minkowski's inequality, group properties,
and arguments previously used, we get
\begin{equation}\label{inq_12}
\begin{aligned}
\norm{\Psi(u)}_{L_x^{2}L_{yT}^{\infty}}&\leq \norm{U(t)u_0}_{L_x^2L_{yT}^{\infty}}+\norm{U(t)\left( \int_{0}^t U(-t')(u^2u_x)(t')dt'\right)}_{L_x^2L_{yT}^{\infty}}\\
& \leq c(s)T^{1/8}\norm{u_0}_{H^s}+c(s)T^{1/8}\int_0^T\norm{(u^2u_x)(t')}_{H^s}dt'\\
& \leq c(s)T^{1/8}\norm{u_0}_{H^s}+c(s)T^{1/8}T^{1/6}\vertiii{u}^3.
\end{aligned}
\end{equation}
Therefore, from \eqref{inq_7}-\eqref{inq_12}, {\color{teal} we can stay that}
{\color{teal}\begin{equation}\label{estimacion_final}
\vertiii{\Psi(u)}\leq (\tilde{c}+c(s)T^{1/8})\norm{u_0}_{H^s}+(\tilde{c}(s)+c(s))T^{1/8}T^{1/6})\vertiii{u}^3.
\end{equation}}
Sine $T < 1$, setting $\tilde C$ sufficiently we can rewritte
{\color{teal}\begin{equation}\label{estimacion_final1}
\vertiii{\Psi(u)}\leq \tilde{C}(s)T^{1/8})\norm{u_0}_{H^s}+\tilde{C}(s)T^{1/8}T^{1/6})\vertiii{u}^3.
\end{equation}}

Choosing $a=2\tilde{C}(s)T^{1/8}\norm{u_0}_{H^s}$ and $T>0$ such that 
\begin{equation}
a^2\tilde{C}(s)T^{1/8}T^{1/6}\leq \frac{1}{4}.
\end{equation}
Then we get 
\begin{equation*}
\vertiii{\Psi(u)} \leq \left(\frac{1}{2}+\tilde{C}(s)^3T^{1/8+1/6}\norm{u_0}_{H^s}^2 \right)2\tilde{C}(s)T^{1/8}\norm{u_0}_{H^s}\leq \frac{3}{4}a.
\end{equation*}
Thus, we see that $\Psi: \mathcal Y_T^a \longrightarrow \mathcal Y_{T}^a$ is well defined. Moreover, similar arguments show that $\Psi$ is a contraction. 
\end{proof} 
\subsection{Case $3\leq k\leq 7,$ ($k\in\mathbb{N}$): gZK model }
{\color{teal} For this case, consider the norm defined as
\begin{equation}\label{norm_2}
\vertiii{u}:= \norm{u}_{L^{\infty}_TH^{s}_{xy}}+ \norm{u}_{L^{p_k}_T L^{\infty}_{xy}}+ \norm{u}_{L^{12/5}_T L^{\infty}_{xy}}+ \norm{D^s_xu_x}_{L^{\infty}_xL^{2}_{yT}}+  \norm{D^s_yu_x}_{L^{\infty}_xL^{2}_{yT}} + \norm{u}_{L^{4}_x L^{\infty}_{yT}},
\end{equation} 
 For each case of $k$, the constants $a, T>0$ will be chosen later.

We can see that, the norm estimates are now modulated by \[p_k=\frac{12(k-1)}{7-12\gamma}\quad  \mbox{with} \quad \gamma \in (0,1/12).\] See \cite{LinaresPastor} for full details . In this case we obtain the estimate for norm \eqref{norm_2} given by
\begin{equation}\label{estimacion_final}
\vertiii{\Psi(u)}\leq c(s)\norm{u_0}_{H^s}+c(s)T^{\gamma}\vertiii{u}^{k+1}.
\end{equation}

Choosing $a=2c(s)\norm{u_0}_{H^s}$ and $T>0$ such that 
\begin{equation}
a^kc(s)T^{\gamma}\leq \frac{1}{4}.
\end{equation}
\begin{rem}
Note that there are important differences between the norms  \eqref{norm_1} and \eqref{norm_2}.  In first place, the norm $\norm{u}_{L^{2}_x L^{\infty}_{yT}}$ that was considered for mZK was replaced by $\norm{u}_{L^{4}_x L^{\infty}_{yT}}$, for the case of the gZK, which does not consider the estimation \eqref{aux1}. Instead, use the smoothing effect of Kato type, see \cite[Lemma 2.1]{LinaresPastor}. In addition, the estimation of the norm $\norm{u}_{L^{p_k}_T L^{\infty}_{xy}}$ inserts the parameter $\gamma$ in the analysis of the well-posedness for the gZK model.
\end{rem}}
%\begin{rem}
%\begin{itemize}
%\item Note that, the differences in the norms considered for the case $k=2$ and $3\leq k\leq 7$, in \eqref{norm_1} \eqref{norm_2} respectively,   lie particularly in the terms . More precisely, the norm 1 that was considered for mZK was replaced by 2, which does not consider the estimation
%\end{itemize}
%sobre  el k más grande y la comparación con las normas que reaperecen, indicando las páginas que debe revisar
%\end{rem}
%

\section{Lower bound for the rate of blow-up}\label{LbrBU} Taking into account the estimate \eqref{estimacion_final} and  denoting 
\[
\mathcal Y(M,T)=\left\{ u: u(t=0,x)=u_0,   \vertiii{u} \leq M \right\},
\]
the local well-posedness theory is obtained by a contraction argument in the space $\mathcal Y(M,T)$, provided the smallness condition $c(s) T^{\frac18+\frac16}M^2<1/4$ is satisfied for $k=2$ and $M^kc(s)T^{\gamma}\leq \frac{1}{4}$ for $k\geq 3.$ 
We adapt arguments developed by Colliander et al. \cite{Colliander} for the Zakharov system, which were originally developed by Weissler \cite{Weissler} for the heat equation, to prove a lower bound on the rate of blow up.\\

\subsection{  mZK model ($k=2$)}
  Denote by $T^{*}$ the supremum of all $T> 0$ for which there exists a solution $u$ of the mZK satisfying
\[
 \| u \|_{L^{\infty}_TH^{s}_{xy}}+ \| u \|_{L^{3}_T L^{\infty}_{xy}}+ \| u\|_{L^{9/4}_T L^{\infty}_{xy}}+ \| D^s_xu_x\|_{L^{\infty}_xL^{2}_{yT}}+  \| D^s_yu_x\|_{L^{\infty}_xL^{2}_{yT}} + \| u \|_{L^{2}_x L^{\infty}_{yT}}< \infty.
\]
The local well-posedness theory shows $T^*>0$ and for all $t\in [0,T^*)$
  \[
\|  u(t) \|_{H^s_{xy}} < \infty.
\]
By maximality of $T^*$, it is impossible that 
\[
\|  u(t) \|_{H^s} < \infty, 
\] 
since, otherwise, it would be possible to demonstrate local well-posedness for $T_1 > T^{*}$ by taking as initial condition $u_0=u(T^*,x).$ Therefore, if $T^*< \infty,$ blow-up occurs:
\[ 
 \| u(t) \|_{H^s} \longrightarrow \infty, \quad \mbox{as} \quad t\longrightarrow T^{*}.
\]
Consider $u(t)$ posed at some time $t\in [0,T^*].$ If for some $M$
\[ 
c(s,(T-t)) \| u(t)\|_{H^s}+c(s,(T-t))(T-t)^{1/6}M^3 <M, 
\]
then, we have local well-posedness in $[t,T]$, thus $T<T^*.$ Therefore, $\forall M>0$
\[ 
\tilde{C}(s)(T^{*}-t)^{1/8} \| u(t)\|_{H^s}+\tilde{C}(s)(T^{*}-t)^{1/8}(T^*-t)^{1/6}M^3>M.
\]
Choosing $M= 2\tilde{C}(s)\| u(t)\|_{H^s}$, we have
\[
2\tilde{C}(s)M^2>(2-(T^*-t)^{1/8})(T^{*}-t)^{-(1/8+1/6)} > (T^{*}-t)^{-(1/8+1/6)}.
\]
Equivalently,
\begin{equation*}
\begin{aligned}
2\tilde{C}(s)^2\| u(t)\|_{H^s}^2 >(T^{*}-t)^{-(1/8+1/6)}, \quad \hbox{ or } \quad  \sqrt{2}\tilde{C}(s) \| u(t)\|_{H^s} >(T^{*}-t)^{-(7/48)}.
\end{aligned}
\end{equation*}
Thus, we finally get \eqref{LBR-BU}.

\medskip
{\color{teal}
\subsection{gZK model $(3\leq k\leq 7, k\in \mathbb{N})$}\label{newsect}
Applying a similar argument for gZK, again  denote by $T^{*}$ the supremum of all $T> 0$ for which there exists a solution $u$ of the gZK satisfying
\[ \norm{u}_{L^{\infty}_TH^{s}_{xy}}+ \norm{u}_{L^{p_k}_T L^{\infty}_{xy}}+ \norm{u}_{L^{12/5}_T L^{\infty}_{xy}}+ \norm{D^s_xu_x}_{L^{\infty}_xL^{2}_{yT}}+  \norm{D^s_yu_x}_{L^{\infty}_xL^{2}_{yT}} + \norm{u}_{L^{4}_x L^{\infty}_{yT}} < \infty.
\]
Now, following the same analysis,
consider $u(t)$ posed at some time $t\in [0,T^*].$ If for some $M$
\[ 
c(s) \| u(t)\|_{H^s}+c(s)(T-t)^{\gamma}M^k<M, 
\]
then, we have local well-posedness in $[t,T]$, thus $T<T^*.$ Therefore, $\forall M>0$
\[ 
c(s) \| u(t)\|_{H^s}+c(s)(T^*-t)^{\gamma}M^k>M.
\]
Choosing $M= 2c(s)\| u(t)\|_{H^s}$, we have
\[
\frac{1}{2}M+c(s)(T^*-t)^{\gamma}M^k>M
\]
Equivalently,
\begin{equation*}
\begin{aligned}
(2c(s))^k\| u(t)\|_{H^s}^{k-1} >(T^{*}-t)^{-\gamma},
\end{aligned}
\end{equation*}
or equivalently
\begin{equation}
\| u(t)\|_{H^s} >\frac{C_k}{(T^{*}-t)^{\gamma/(k-1)}},
\end{equation}
where $C_k= 1/(2c(s)^{k/(k-1)}).$ Note that , since $3\leq k \leq 7$ and $\gamma \in (0.1/12)$ we have that  must be \[\frac{\gamma}{k-1} \in (0,1/24)\] which concludes the proof of the Theorem. }
%%%%%%%%%%%%%%%%%%%%%%%%%%%%%%

%{\color{red}
%Por que esto debiese fallar si la dimension es menor o la nolinealidad es menor. En estos casos hay existencia global.
%}

\subsection*{Funding and/or Conflicts of interests/Competing interests.}  The author declare no conflict of interest present during the elaboration of this work and its posterior publication.

{\footnotesize
%\bibliographystyle{unsrt}
	%\bibliography{bibfile}

}
%%%%%%%%%%%%%%

\end{document}